\documentclass[a4paper,10pt]{amsart}
\usepackage{amsmath,amsthm,amsfonts,amssymb,calrsfs}

\usepackage[all]{xy}
\usepackage{tikz-cd, tikz}

\usetikzlibrary{arrows.meta,calc}

\usepackage{changepage}
\usepackage{float}
\usepackage{graphicx}
\usepackage{yfonts}
\usepackage{xcolor}

\usepackage{array}
\usepackage{multirow}
\usepackage{caption}

\usepackage{todonotes}

\usepackage[hyperindex=true, colorlinks=true, linkcolor=blue, filecolor=black,
citecolor=blue, urlcolor=blue,
pagebackref=true,
bookmarks=true, bookmarksopen=true, bookmarksnumbered=true,
bookmarksopenlevel=2, pdfborder={0 0 0}]{hyperref}
\usepackage{cleveref}

\theoremstyle{plain}{
    \newtheorem{theorem}{Theorem}[section]
    \newtheorem{lemma}[theorem]{Lemma}
    
    \newtheorem{proposition}[theorem]{Proposition}
    
}

\theoremstyle{definition}{
    
    \newtheorem{defn-thm}[theorem]{Definition-Theorem}

}

\theoremstyle{remark}{
    \newtheorem{remark}[theorem]{Remark}

}

\def\rank{\mathrm{rank}}

\def\cA{\mathcal{A}}

\def\cF{\mathcal{F}}
\def\cE{\mathcal{E}}
\def\cO{\mathcal{O}}

\def\cT{\mathcal{T}}

\def\cB{\mathcal{B}}

\def\cQ{\mathcal{Q}}

\def\k{\mathfrak{k}}

\def\R{\mathbb{R}}
\def\Q{\mathbb{Q}}
\def\Z{\mathbb{Z}}

\def\P{\mathbb{P}}

\def\C{\mathbb{C}}

\def\>{\rangle}

\def\<{\langle}
\def\>{\rangle}

\def\Vol{\mathrm{Vol}}
\def\mult{\mathrm{mult}}

\DeclareFontFamily{U}{mathc}{}
\DeclareFontShape{U}{mathc}{m}{it}%
{<->s*[1.03] mathc10}{}
\DeclareMathAlphabet{\mathcal}{U}{mathc}{m}{it}

\begin{document}

\title[Subspace concentration and BG inequality]{Subspace concentration condition
and Bogomolov--Gieseker inequality}

\address{Univ Brest, UMR CNRS 6205, Laboratoire de Mathématiques de Bretagne
Atlantique, France}

\author[C. Tipler]{Carl TIPLER}
\email{carl.tipler@univ-brest.fr}


\begin{abstract}
We provide a necessary condition for  integral polytopes to satisfy subspace concentration type inequalities, by mean of a convex geometric interpretation of the Bogomolov--Gieseker inequality.
\end{abstract}

\maketitle

\section{Introduction}
A finite Borel measure $\mu$ on the sphere $S^{n-1}$ is said to satisfy the \textit{subspace concentration inequalities} (SCI) if for any subspace $E\subset \R^n$,
\begin{equation}
 \label{eq:SCC for sphere measures}
 \mu(E\cap S^{n-1})\leq \frac{\dim(E)}{n} \, \mu(S^{n-1}).
\end{equation}
If in addition equality holds for $E\subset \R^n
$ if and only if there exists a decomposition $E\oplus G = \R^n$  such that $n\,\mu(G\cap S^{n-1})= \dim(G) \, \mu(S^{n-1})$, $\mu$ satisfies the \textit{subspace concentration condition} (SCC). Those conditions play an important role in convex geometry. The SCC characterizes cone-volume measures of origin-symmetric convex bodies amongst finite even Borel measures on the unit sphere \cite{BoLuYaZha}, and is satisfied by cone-volume measures of polytopes with centroid at the origin \cite{HenkLinke}.

In this note, we introduce a subspace concentration condition for integral polytopes, and show that it imposes constraints on a weighted average measure of the codimension $2$ facets, where the weights are determined by angles between facet normals. Let $N$ be a rank $n$ lattice with dual $M=\mathrm{Hom}_\Z(N,\Z)$ and duality pairing $\langle\cdot,\cdot\rangle$. A lattice polytope $P\subset M_\R$ is the convex hull in $M_\R:=M\otimes_\Z\R$ of a finite number of points in $M$ (we will always assume $P$ to be full-dimensional). Equivalently, $P$ is a finite intersection of half-spaces
\begin{equation}
 \label{eq:facet presentation}
P=\lbrace m\in M_\R \:\vert\: \forall\, \rho\in\Sigma(1),\, \langle m , u_\rho \rangle \geq -a_\rho \rbrace,
\end{equation}
for some integers $(a_\rho)_{\rho\in\Sigma(1)}$ and primitive inward pointing facet normals $(u_\rho)_{\rho\in\Sigma(1)}$ in $N$ (at that stage $\Sigma(1)$ is just a finite set, but we use standard toric geometry notations for later purposes).
 We will denote by $F^\rho$ the facet of $P$ with inward normal $u_\rho$, and we index by $\Sigma(2)$ the codimension $2$ faces of $P$, that we may assume of the form $F^\tau=F^\rho\cap F^{\rho'}$ for suitable $(\rho,\rho')\in\Sigma(1)^2$. For such a face $F^\tau$, we introduce the multiplicity $m_\tau=\mult(\tau)$  to be the index of the lattice spaned by $(u_\rho,u_{\rho'})$ in the lattice $(\R\cdot u_\rho +\R\cdot u_{\rho'})\cap N$. We also introduce an invariant $\cA_\tau\in\Q$ as follows. Subdivide the two dimensional cone $\tau:=\R_+\cdot u_\rho +\R_+\cdot u_{\rho'}\subset N_\R$ into $m_\tau$ cones by considering the rays in $\tau$ spaned by elements in
\[
N\cap\lbrace  \lambda u_\rho +\lambda' u_{\rho'}\:\vert\: 0\leq \lambda<1,\: 0\leq \lambda' <1 \rbrace,
\]
see Figure \ref{fig:subdivision} in Section \ref{sec:figures}. By choosing an integral basis $\cB$ for $N$, we can fix an integral scalar product\footnote{$Q$ is said integral when it takes integral values on $N$} $Q$ on $N_\R=N\otimes_\Z\R$ making $\cB$ orthonormal and inducing an isomorphism $M_\R\simeq N_\R$.
Then, we set
\begin{equation}
 \label{eq:Alpha tau}
\cA_\tau :=\sum_{\substack{ \tau'\subset \tau\\\tau'=\rho_1+\rho_2}} \Big( 2+(n-1)\Big(\frac{Q(u_{\rho_1},u_{\rho_2})}{Q(u_{\rho_1},u_{\rho_1})}+\frac{Q(u_{\rho_2},u_{\rho_1})}{Q(u_{\rho_2},u_{\rho_2})}\Big)\Big)
\end{equation}
where the sum is over the $m_\tau$ cones subdividing $\tau$, and $u_{\rho_i}$ is a primitive generator of the ray $\rho_i$. Given a face $F\subset P$, we will denote by $\mathrm{Vol}_M(F)$
its lattice volume in the affine span of $F$ with respect to the lattice $\mathrm{span}(F)\cap M$.
\begin{theorem}
 \label{theointro:main theorem}
 Let $P\subset M_\R$ be a lattice polytope with facet presentation \eqref{eq:facet presentation}. Assume that the following subspace concentration inequalities hold: for any subspace $E\subset N_\R$,
 \begin{equation}
  \label{eq:SCC for lattice polytopes}
   \sum_{u_\rho\in E} \: \Vol_M(F^\rho) \leq \frac{\dim(E)}{n} \: \sum_{\rho\in\Sigma(1)} \: \Vol_M(F^\rho).
 \end{equation}
Then,
\begin{equation}
 \label{eq:BG for lattice polytopes}
 \sum_{\tau\in\Sigma(2)}\cA_\tau \: \Vol_M(F^\tau) \geq 0.
\end{equation}
\end{theorem}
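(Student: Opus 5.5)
The plan is to translate everything into toric geometry and to recognize \eqref{eq:BG for lattice polytopes} as the Bogomolov--Gieseker inequality for the tangent sheaf of the polarized toric variety attached to $P$.

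\textbf{Setup.} Let $(X,L)$ be the polarized toric variety defined by $P$, with fan $\Sigma$ the normal fan of $P$. Since $\Sigma$ may be singular, I would pass to a suitable $\Q$-factorial, or smooth, resolution. The subdivision of each two-dimensional cone $\tau$ by the rays through lattice points of the half-open parallelogram is exactly what a resolution does in codimension $2$; this explains the sum over the $m_\tau$ subcones in \eqref{eq:Alpha tau}.

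\textbf{Step 1: slope stability.} The inequalities \eqref{eq:SCC for lattice polytopes} are equivalent to $L$-semistability of the tangent sheaf $\cT_X$. The argument runs through the equivariant classification of reflexive subsheaves of $\cT_X$. By Klyachko's description, the relevant destabilizing candidates are equivariant saturated subsheaves $\cF_E\subset\cT_X$ indexed by subspaces $E\subset N_\C$, and for these one has
\[
c_1(\cF_E)\cdot L^{n-1}=\sum_{u_\rho\in E}\Vol_M(F^\rho),
\]
up to a uniform normalization. Indeed $D_\rho\cdot L^{n-1}=\Vol_M(F^\rho)$ for the torus-invariant divisor $D_\rho$. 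The same formula with $E=N_\R$ gives $c_1(\cT_X)\cdot L^{n-1}=\sum_\rho\Vol_M(F^\rho)$. Comparing slopes $\mu(\cF_E)\le\mu(\cT_X)$ with $\rank\cF_E=\dim E$ yields exactly \eqref{eq:SCC for lattice polytopes}. It suffices to test equivariant subsheaves, because the Harder--Narasimhan filtration is torus-invariant.

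\textbf{Step 2: Bogomolov--Gieseker.} For a semistable reflexive sheaf on a normal projective variety, the Bogomolov--Gieseker inequality
\[
\bigl(2n\,c_2(\cT_X)-(n-1)c_1(\cT_X)^2\bigr)\cdot L^{n-2}\ge 0
\]
holds. On singular $X$ one uses orbifold Chern classes, or works on the resolution $\pi\colon\widetilde X\to X$ with $\pi^*L$, which is nef and big. In the latter case one needs Bogomolov--Gieseker for sheaves semistable with respect to a nef and big class; this follows by perturbing $\pi^*L$ to $\pi^*L-\varepsilon E$ and taking a limit. This is the point where I expect the main technical obstacle: checking that semistability transfers to the resolution, via $\pi_*\cT_{\widetilde X}$ agreeing with $\cT_X$ in codimension one, and that the discriminant computed upstairs is the one whose intersection with $L^{n-2}$ involves only codimension-$2$ torus-invariant strata. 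I would try the smooth resolution first.

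\textbf{Step 3: toric computation.} On the smooth toric model, the Euler sequence gives $c(\cT)=\prod_\rho(1+D_\rho)$. Hence $c_1=\sum_\rho D_\rho$ and $c_2=\sum_{\rho<\rho'}D_\rho D_{\rho'}$, so
\[
2n\,c_2-(n-1)c_1^2=2\sum_{\rho<\rho'}D_\rho D_{\rho'}-(n-1)\sum_\rho D_\rho^2.
\]
Intersecting with $\pi^*L^{n-2}$ kills all strata contracted by $\pi$. What survives are the products $D_{\rho_1}D_{\rho_2}$ for adjacent rays inside a subdivided $\tau$; each equals $V(\tau)$ with $V(\tau)\cdot L^{n-2}=\Vol_M(F^\tau)$, after accounting for the multiplicity. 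Next I would compute $D_{\rho}^2\cdot L^{n-2}$ using the linear relations $\sum_\rho\langle m,u_\rho\rangle D_\rho=0$. Taking $m=Q(u_\rho,\cdot)$ expresses $Q(u_\rho,u_\rho)D_\rho$ as $-\sum_{\rho'\neq\rho}Q(u_\rho,u_{\rho'})D_{\rho'}$. Multiplying by $D_\rho$ turns $D_\rho^2$ into a sum over the neighbours $\rho_1,\rho_2$ of $\rho$ in the two-dimensional cones, with coefficients $-Q(u_\rho,u_{\rho_i})/Q(u_\rho,u_\rho)$. Regrouping by subcones $\tau'=\rho_1+\rho_2$ gives $\sum_\tau\cA_\tau\Vol_M(F^\tau)$ and proves \eqref{eq:BG for lattice polytopes}.

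The bookkeeping of the regrouping is routine. The delicate points are the normalization of lattice volumes on the codimension-$2$ faces, and checking that the $(n-1)$ factor combines with the two terms per subcone exactly as in \eqref{eq:Alpha tau}.
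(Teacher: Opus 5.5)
Your strategy is the paper's: a toric resolution with the pulled-back nef and big class, SCI $\Leftrightarrow$ semistability of the tangent sheaf via Klyachko--Perling and torus-invariance of the Harder--Narasimhan filtration, Bogomolov--Gieseker, and removal of $D_\rho^2$ via $\mathrm{div}(\chi^m)$ with $m$ dual to $u_\rho$ under $Q$. Your Step 3 algebra is correct.

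\textbf{Gap in Step 2.} Perturbing $\pi^*L$ to the ample class $\pi^*L-\varepsilon E$ and letting $\varepsilon\to 0$ only works if the sheaf stays semistable for the perturbed polarisations, and only \emph{stability} is open under such perturbations. The SCI are non-strict, so $\cT_{\widetilde X}$ may be strictly semistable. It can then be unstable for every small $\varepsilon>0$: think of a direct sum of two line bundles with equal $\pi^*L$-degree but different $(\pi^*L-\varepsilon E)$-degrees. In that case you have no inequality to pass to the limit.

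The missing step is the one the paper supplies:
\begin{itemize}
\item take a Jordan--H\"older filtration whose factors are stable of the same $\pi^*L$-slope;
\item prove Bogomolov--Gieseker for each factor by your perturbation argument;
\item show that $\Delta\cdot(\pi^*L)^{n-2}\ge 0$ passes to extensions of equal-slope sheaves, by a direct computation plus the Hodge index (Khovanskii--Teissier) inequality for $\pi^*L$.
\end{itemize}
Alternatively, cite Langer's Bogomolov inequality for nef polarisations.

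By contrast, the point you single out as the main obstacle is harmless if you run Step 1 directly on $\widetilde X$, as the paper does. For an exceptional ray, $\pi_*D_\rho=0$, so $D_\rho\cdot(\pi^*L)^{n-1}=0$. Hence the SCI is literally equivariant semistability of $\cT_{\widetilde X}$ with respect to $\pi^*L$, and no comparison of $\pi_*\cT_{\widetilde X}$ with $\cT_X$ is needed.

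\textbf{Gap in the setup.} You claim the parallelogram subdivision ``is exactly what a resolution does in codimension $2$''. This is false for a general resolution, and it matters, because $\sum_\tau\cA_\tau\Vol_M(F^\tau)$ depends on which smooth subdivision of each $\tau$ the resolution induces. For $\tau$ spanned by $(1,0),(2,5)$, the minimal resolution inserts only the rays through $(1,1)$ and $(1,2)$, while the parallelogram also contains $(2,3)$; the two choices give different values of $\cA_\tau$.

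You need the analogue of Lemma~\ref{lem:2 cones}: a projective resolution by star subdivisions whose restriction to each $\tau\in\Sigma(2)$ is exactly the prescribed subdivision. This holds for two reasons.
\begin{itemize}
\item That subdivision is smooth. Suppose consecutive primitive generators $w_1,w_2$ had $|\det|\ge 2$. Then there is a lattice point $aw_1+bw_2$ with $a,b>0$ and $a+b\le 1$. Each $w_i$ lies in the half-open parallelogram or equals $u_\rho$ or $u_{\rho'}$, and not both lie in $\{u_\rho,u_{\rho'}\}$. So this point lies in the half-open parallelogram, strictly between the two rays, a contradiction.
\item The construction can be arranged to respect it. Simplicialisation keeps every cone of $\Sigma(2)$. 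You can then star-subdivide each $\tau$ at these rays. Every later step stars at a point in the relative interior of a non-smooth cone, so it leaves smooth $2$-cones untouched.
\end{itemize}

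Finally, read ``after accounting for the multiplicity'' as ``once per subcone''. Each $\tau'\subset\tau$ contributes $(n-2)!\Vol_M(F^\tau)$ and its own constant $2$. The number of subcones can be smaller than $\mult(\tau)$, since several lattice points of the parallelogram may lie on one ray (e.g.\ $(0,1),(0,2)$ for $\tau$ spanned by $(1,0),(-1,3)$). Your regrouping yields $\cA_\tau$ correctly provided you do not replace this count by $\mult(\tau)$.
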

The proof of Theorem \ref{theointro:main theorem} is a translation into convex geometric terms that a slope semistable tangent sheaf satisfies the Bogomolov--Gieseker inequality, by mean of toric geometry.  Note that  we do not assume $P$ to be smooth, so the argument requires several minor generalisations of classical results.

 The subspace concentration condition considered in Theorem \ref{theointro:main theorem} differs slightly from the classical one from convex geometry, as we now explain. Given a polytope $P\subset M_\R\simeq \R^n$ with the origin in its interior, the cone-volume measure of $P$ is defined as follows (see Figure \ref{fig:polytopes} in Section \ref{sec:figures}). For any facet $F\subset P$ with outter unit normal $v_F\in S^{n-1}$, we let $\Vol(C_F)$ to be the euclidean volume of the cone $C_F$ with basis $F$ and apex $0$. Then, the cone-volume measure $\mu_P$ of $P$ is a discrete measure on $S^{n-1}$ defined by
 \[
  \mu_P=\sum_{F\subset P} \, \Vol(C_F)\, \delta_{v_F},
 \]
 where the sum runs over facets of $P$. It satisfies the SCI \eqref{eq:SCC for sphere measures} if for any subspace $E\subset \R^n$, one has
 \begin{equation}
  \label{eq:SCI for cone vol measure}
\sum_{v_F\in E} \: \Vol(C_F) \leq \frac{\dim(E)}{n}\, \sum_{F\subset P} \, \Vol(C_F).
 \end{equation}
 Assume now that $P\subset M_\R$ is a lattice polytope with facet presentation \eqref{eq:facet presentation}. We consider the euclidean structure on $\R^n\simeq N_\R\simeq M_\R$ induced by $Q$. By definitions and the pyramid formula, for any facet $F^\rho$, one has
 \begin{equation*}
\Vol(C_{F^\rho})  = \displaystyle\frac{1}{n}\,\vert a_\rho \vert\,\Vol_M(F^\rho).
 \end{equation*}
Then, the SCI \eqref{eq:SCC for lattice polytopes} corresponds to the SCI \eqref{eq:SCI for cone vol measure} when all the $a_\rho$'s are equal to $1$. In this case $P$ is called a \textit{reflexive} polytope. Relying on deep results from gauge theory and algebraic geometry, namely stability of the tangent sheaves of  $K$-polystable smooth Fano toric varieties, it was observed in \cite{HNS19} that smooth  reflexive polytopes with barycenter at the origin satisfy the SCI, which is a special case of the main result in \cite{HenkLinke}.  For smooth reflexive polytopes, Theorem \ref{theointro:main theorem} can be rephrased as follows.
\begin{theorem}
 \label{theointro:Fano case}
 Let $P\subset M_\R$ be a smooth reflexive lattice polytope with facets $(F^\rho)_{\rho\in\Sigma(1)}$ and primitive inward pointing facet normals $(u_\rho)_{\rho\in\Sigma(1)}$. Assume that the following subspace concentration inequalities hold: for any subspace $E\subset N_\R$,
 \begin{equation}
  \label{eq:SCC for reflexive polytopes}
   \sum_{u_\rho\in E} \: \Vol_M(F^\rho) \leq \dim(E) \:  \Vol_M(P).
 \end{equation}
Then
\begin{equation}
 \label{eq:BG for reflexive polytopes}
 \sum_{\tau\in\Sigma(2)}\, \Vol_M(F^\tau) \geq \frac{(n-1)^2}{2}\,\Vol_M(P).
\end{equation}
 \end{theorem}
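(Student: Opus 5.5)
The plan is to deduce Theorem \ref{theointro:Fano case} directly from Theorem \ref{theointro:main theorem}. We specialise to $a_\rho=1$ and $m_\tau=1$, and then evaluate the $Q$-dependent part of $\sum_\tau \cA_\tau\Vol_M(F^\tau)$ by a pyramid formula inside each facet. No further algebraic geometry should be needed.

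\emph{Step 1: the hypotheses match.} Since $P$ is reflexive, all $a_\rho=1$. The pyramid formula recalled in the introduction, $\Vol(C_{F^\rho})=\frac1n|a_\rho|\Vol_M(F^\rho)$, holds because $Q$ makes an integral basis of $N$ orthonormal, so euclidean and lattice volumes agree. Summing it over facets gives $\sum_\rho \Vol_M(F^\rho)=n\,\Vol_M(P)$. Hence \eqref{eq:SCC for reflexive polytopes} is exactly \eqref{eq:SCC for lattice polytopes}, and Theorem \ref{theointro:main theorem} yields \eqref{eq:BG for lattice polytopes}.

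\emph{Step 2: the smooth specialisation of $\cA_\tau$.} Since $P$ is smooth, $u_\rho,u_{\rho'}$ is part of a basis of $N$ for every $\tau=\rho+\rho'$. Thus $m_\tau=1$, no subdivision occurs, and
\[
\cA_\tau=2+(n-1)\,T_\tau,\qquad T_\tau:=\frac{Q(u_\rho,u_{\rho'})}{Q(u_\rho,u_\rho)}+\frac{Q(u_{\rho'},u_\rho)}{Q(u_{\rho'},u_{\rho'})}.
\]
Write $S=\sum_\tau\Vol_M(F^\tau)$ and $T=\sum_\tau T_\tau\Vol_M(F^\tau)$. Then \eqref{eq:BG for lattice polytopes} reads $2S+(n-1)T\geq 0$.

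\emph{Step 3: the key identity $T=2S-n(n-1)\Vol_M(P)$.} Fix a facet $F^\rho$, which lies in the hyperplane $\langle m,u_\rho\rangle=-1$. Take as apex the point $p_\rho=-u_\rho/Q(u_\rho,u_\rho)$ of that hyperplane (the $Q$-orthogonal projection of $0$). We apply the signed pyramid formula in the $(n-1)$-dimensional lattice $u_\rho^\perp\cap M$:
\[
(n-1)\Vol_M(F^\rho)=\sum_{F^\tau\subset F^\rho} h_\tau(p_\rho)\,\Vol_M(F^\tau).
\]
Here $h_\tau(p_\rho)$ is the signed lattice distance from $p_\rho$ to the facet $F^\tau=F^\rho\cap F^{\rho'}$ of $F^\rho$. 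Inside the hyperplane, $F^\tau$ is cut out by $\langle m,u_{\rho'}\rangle\geq -1$. Smoothness makes the restriction of $u_{\rho'}$ primitive in the dual of the facet lattice, since $u_\rho,u_{\rho'}$ extend to a basis of $N$. Hence
\[
h_\tau(p_\rho)=\langle p_\rho,u_{\rho'}\rangle+1=1-\frac{Q(u_\rho,u_{\rho'})}{Q(u_\rho,u_\rho)}.
\]
Summing over all $\rho$ counts each $\tau$ twice, once from each of its two facets. This gives $(n-1)\,n\,\Vol_M(P)=2S-T$, using Step 1.

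\emph{Step 4: conclusion.} Substituting Step 3 into $2S+(n-1)T\geq0$ gives $2nS-n(n-1)^2\Vol_M(P)\geq 0$, which is \eqref{eq:BG for reflexive polytopes}. As a sanity check, this agrees with the Bogomolov--Gieseker inequality $(2n\,c_2-(n-1)c_1^2)\cdot(-K)^{n-2}\geq0$ for the tangent bundle, using $(-K)^n=n!\Vol_M(P)$ and $c_2\cdot(-K)^{n-2}=(n-2)!\,S$.

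The main obstacle is Step 3. One must check that the pyramid formula remains valid with signed heights when $p_\rho$ is not in the interior of $F^\rho$. One must also check the normalisation of the lattice distance $h_\tau$, where primitivity of $u_{\rho'}$ modulo $u_\rho$ is where smoothness is used. Finally, the volume normalisations in the facet lattice and in $M$ must be consistent with the convention in the pyramid formula above.
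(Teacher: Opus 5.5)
Your proposal is correct, but it takes a different route from the paper. The paper never passes through Theorem \ref{theointro:main theorem} or the invariants $\cA_\tau$. Since $X$ is smooth Fano with $D=-K_X=c_1(X)$, it reads the hypothesis as (equivariant, hence full) semistability of $\cT_X$ and applies Bogomolov--Gieseker. It then evaluates the discriminant in one line, using $c_1^2(X)\cdot D^{n-2}=D^n=n!\,\Vol_M(P)$ and $c_2(X)\cdot D^{n-2}=(n-2)!\sum_\tau\Vol_M(F^\tau)$.

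You instead specialise the general inequality $\sum_\tau\cA_\tau\Vol_M(F^\tau)\geq 0$. You then eliminate the $Q$-dependent term $T$ through the elementary identity $2S-T=n(n-1)\Vol_M(P)$, which you obtain from the signed pyramid formula in each facet. The points you flag as obstacles all go through:
\begin{itemize}
\item The signed pyramid formula holds for any apex in the affine hull. The left side is constant and the right side is affine in the apex. Its linear part vanishes by the Minkowski relation for the facets of $F^\rho$, which holds with lattice-normalised volumes and primitive normals. The two sides agree for an interior apex by the usual decomposition.
\item Primitivity of $u_{\rho'}$ in $N/\Z u_\rho$ is exactly the condition $\mult(\tau)=1$.
\end{itemize}
With apex $p_\rho=m_\rho$, your per-facet identity is precisely $D_\rho\cdot D\cdot D^{n-2}$ expanded using $D=\sum_{\rho'}D_{\rho'}$ and $D_\rho^2\sim_\Q D_\rho\cdot(D_\rho+\mathrm{div}(\chi^{m_\rho}))$. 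So it is the facet-by-facet convex counterpart of the paper's Section 3 computation combined with $c_1(X)=D$.

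The paper's argument is shorter and self-contained. Yours routes through the general statement, which in the smooth case rests on the same Bogomolov--Gieseker input. In exchange it gains three things:
\begin{itemize}
\item It actually justifies the introduction's claim that Theorem \ref{theointro:Fano case} is a rephrasing of Theorem \ref{theointro:main theorem}.
\item It shows by elementary means that, in the reflexive case, the weighted sum does not depend on the choice of $Q$ (equivalently, of the $m_\rho$).
\item It replaces the intersection-theoretic identity $c_1^2\cdot D^{n-2}=D^n$ by pure convex geometry, in the spirit of the question the paper raises.
\end{itemize}
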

The conclusion of Theorem \ref{theointro:Fano case}, Inequality \eqref{eq:BG for reflexive polytopes}, is rather surprising, at least to the author, from the point of view of volume polynomials. Recall that the \textit{volume polynomial} $f_P$ of $P$ is defined by
\[
f_P  : (x_\rho)_{\rho\in\Sigma(1)} \mapsto \Vol_M(P(x_\rho)),
\]
where
\[
 P(x_\rho)=\lbrace m\in M_\R \:\vert\: \langle m , u_\rho \rangle \geq -x_\rho \rbrace.
\]
It is known that the volume polynomial is log concave. As the facet volumes $\Vol_M(F^\rho)$ appear to be first order derivatives of $f_P$, the inequalities \eqref{eq:SCC for reflexive polytopes} are upper bounds for directional derivatives of $\log(f_P)$. On the other hand, the volumes of codimension $2$ faces are second order mixed derivatives, and \eqref{eq:BG for reflexive polytopes} provides a \textit{lower bound} for the off-diagonal terms of the Hessian of $\log(f_P)$. While our proof relies on intersection theoretical interpretations of those inequalities, it would be interesting to obtain a direct proof using convex geometry. Indeed, many features of volume polynomials in algebraic geometry are shared with volume polynomials of convex bodies \cite{Huh26}. The results in this note then raise a natural question : is there a Bogomolov--Gieseker type inequality for convex bodies? While a scaling argument shows that Theorem \ref{theointro:main theorem} can be generalised to rational polytopes, an extension to general convex bodies will require a different approach.

\subsection*{Acknowledgments}
The author would like to thank Christophe Mourougane for raising the question of the Bogomolov--Gieseker inequality for toric sheaves, which ultimately lead to this note. The author is partially supported by the grants MARGE ANR-21-CE40-0011 and BRIDGES ANR-FAPESP ANR-21-CE40-0017. He also beneficiated from the support of the French government “Investissements d'Avenir” program integrated to France 2030, under the reference ANR-11-LABX-0020-01.

 \subsection*{AI declaration use} This article was written during summer 2026, before the creation of the Association for Human Mathematics (AHM). A LLM was used to help producing the figures in the last section. Apart from these figures, all the content of this paper was produced by the author, who fully supports the AHM. 
 
\section{Preliminaries}
\label{sec:background}
We will use standard notations from toric geometry, and refer to \cite{Ful93,CLS} for the basics. In particular, $\Sigma$ will stand for a fan of strongly convex rational polyhedral cones in $N_\R$, and $\Sigma(k)$ for its $k$-dimensional cones. The fan $\Sigma$ determines a normal toric variety $X$, and to each $\sigma\in\Sigma$ we associate the orbit closure $V(\sigma)\subset X$ \cite[Section 3.2]{CLS}. In particular, we will denote $D_\rho=V(\rho)$ the torus invariant divisor associated to $\rho\in\Sigma(1)$.

\subsection{Lattice polytopes, toric varieties, and intersections}
To a lattice polytope $P\subset M_\R$ with facet presentation \eqref{eq:facet presentation}, one may associate a normal projective toric variety $X$ together with an ample Cartier divisor
\[
D=\sum_{\rho\in\Sigma(1)} a_\rho\, D_\rho \subset X
\]
in such a way that the normal fan $\Sigma$ of $P$ is the fan $\Sigma_X$ of $X$ \cite[Chapter 5]{Ful93}. We will use a toric and projective resolution of singularities
\[
 f: X'\to X
\]
where $X'$ is a smooth and complete toric variety with fan $\Sigma'$ obtained from $\Sigma$ by successive star subdivisions, as in \cite[Theorem 11.1.9, Proposition 11.1.7]{CLS}. Note in particular that $f$ is proper. We can pullback the Cartier divisor
\[
D'=f^*D =\sum_{\rho\in\Sigma'(1)} b_{\rho}\, D_{\rho} \subset X'
\]
to $X'$. This divisor $D'$ is a big and nef divisor on $X'$ (nefness of $D'$ comes from global generation \cite[Theorem 6.3.12]{CLS} and bigness from the fact that $P$ is full dimensional). In this setting, we have the following proposition for intersection numbers on $X'$.
\begin{proposition}
 \label{prop:intersection numbers}
 Let $\rho\in\Sigma'(1)$ and $\tau'\in\Sigma'(2)$. Then the following holds.
 \begin{enumerate}
  \item Either $\rho\in\Sigma(1)$ and then
  \[
   D_\rho\cdot (D')^{n-1}=(n-1)!\,\Vol_M(F^\rho),
  \]
or $\rho\notin\Sigma(1)$ and then
\[
D_\rho\cdot (D')^{n-1}=0.
\]
\item Either there exists $\tau\in\Sigma(2)$ with $\tau'\subset \tau$ and then
\[
   [V(\tau')]\cdot (D')^{n-2}=(n-2)!\,\Vol_M(F^\tau),
  \]
  or
  \[
[V(\tau')]\cdot (D')^{n-2}=0.
   \]
 \end{enumerate}
\end{proposition}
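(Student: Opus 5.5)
We prove both items by reducing them to intersection numbers on $X$ via the projection formula. Let $V\subset X'$ be an orbit closure $V(\sigma')$ for $\sigma'\in\Sigma'$, with $k=\dim\sigma'\in\{1,2\}$. Then
\[
[V(\sigma')]\cdot (f^*D)^{n-k} = f_*[V(\sigma')]\cdot D^{n-k},
\]
which holds because $f$ is proper and $D$ is Cartier. We then compute $f_*[V(\sigma')]$ torically. Since $f$ is the toric morphism induced by the identity of $N$ and $\Sigma'$ refines $\Sigma$, it maps the orbit $O(\sigma')$ onto $O(\sigma)$, where $\sigma\in\Sigma$ is the smallest cone containing $\sigma'$ (see \cite[Lemma 3.3.21]{CLS}). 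Hence $f(V(\sigma'))=V(\sigma)$.

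If $\dim\sigma>\dim\sigma'$, then $\dim f(V(\sigma'))<\dim V(\sigma')$, so $f_*[V(\sigma')]=0$ and the intersection number vanishes. In item (1), this is exactly the case $\rho\notin\Sigma(1)$: a new ray of a star subdivision lies in the relative interior of some cone of dimension $\geq 2$. In item (2), if $\tau'$ is not contained in any $\tau\in\Sigma(2)$, the minimal cone containing it has dimension $\geq 3$, which again gives $0$.

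If $\dim\sigma=\dim\sigma'$, then $\sigma'\subset\sigma$ with the same linear span. The map $V(\sigma')\to V(\sigma)$ is then the toric morphism induced by the identity on $N(\sigma')=N/(\mathrm{span}(\sigma')\cap N)=N(\sigma)$, which is birational of degree $1$. So $f_*[V(\sigma')]=[V(\sigma)]$. When $k=1$ this means $\rho'=\rho$, so $\rho\in\Sigma(1)$. When $k=2$ it means $\tau'\subset\tau\in\Sigma(2)$.

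It remains to compute $D^{n-k}\cdot[V(\sigma)]=(D|_{V(\sigma)})^{n-k}$. By \cite[Proposition 4.2.?]{CLS} (restriction of a torus-invariant Cartier divisor to an orbit closure), the restriction of $D$ to $V(\sigma)$ is linearly equivalent to an ample torus-invariant divisor whose polytope is a translate of the face $F^\sigma$. This polytope lives in the lattice $\sigma^\perp\cap M$, which is the lattice $\mathrm{span}(F^\sigma)\cap M$ after translation. The degree formula $(L)^d=d!\,\Vol(P_L)$ for an ample divisor on a projective toric variety of dimension $d$ (\cite[Theorem 13.4.3]{CLS}) holds without smoothness, since it comes from Hilbert polynomial/Ehrhart asymptotics. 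It gives $(n-k)!\,\Vol_M(F^\sigma)$, which finishes both items.

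The main obstacle is bookkeeping in the singular setting. First, the degree of $V(\sigma')\to V(\sigma)$ must be exactly $1$ and not a lattice index. This holds because both quotient lattices are $N$ modulo the saturated lattice of the common span. Second, the restriction to $V(\sigma)$ needs care because $V(\sigma)$ may be singular and $D$ is merely Cartier. Here we would shift $D$ by a character $m_\sigma$ so that its support misses $V(\sigma)$, and then identify the restricted polytope with $F^\sigma-m_\sigma$ measured in $\sigma^\perp\cap M$.
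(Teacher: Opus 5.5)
Your proposal is correct and follows the paper's route. You use the projection formula for the proper map $f$, then identify $f_*[V(\sigma')]$ as $[V(\sigma)]$ or $0$ according to whether the minimal cone $\sigma\in\Sigma$ containing $\sigma'$ has the same dimension, and finish with the toric degree formula $(D|_{V(\sigma)})^{n-k}=(n-k)!\,\Vol_M(F^\sigma)$. You also supply details the paper delegates to Fulton, namely that $V(\sigma')\to V(\sigma)$ has degree one and how $D$ restricts.

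One wording slip: after replacing $D$ by $D+\mathrm{div}(\chi^{m_\sigma})$, its support no longer contains $V(\sigma)$ and misses the orbit $O(\sigma)$. It does not miss $V(\sigma)$ itself; if it did, the restricted divisor would be trivial and you could not identify its polytope with $F^\sigma-m_\sigma$.
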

\begin{proof}
 The results are certainly well known, but we recall the proof for the sake of completeness. By the push--pull formula, we have
 \[
   D_\rho\cdot (D')^{n-1}=f_*D_\rho\cdot D^{n-1}
 \]
and
  \[
    [V(\tau')]\cdot (D')^{n-2}=f_* [V(\tau')]\cdot D^{n-2}.
 \]
 Using the properties of pushforward of cycles (see the discussion at the end of \cite[Section 5.1]{Ful93}), we find that either $\rho\in\Sigma(1)$ and then $f_*D_\rho=D_\rho\subset X$, or $f_* D_\rho=0$. Similarly for $\tau'$, either there exists $\tau\in\Sigma(2)$ with $\tau'\subset \tau$ and then $f_*[V(\tau')]=[V(\tau)]$ or $f_*[V(\tau')]=0$. To conclude, we use the fact that from \cite[Chapter 5]{Ful93}, the intersection theory on $X$ yields
\begin{equation}
 \label{eq:intersection D rho}
\Vol_M(F^\rho)=\frac{1}{(n-1)!}\, D_\rho\cdot D^{n-1}
\end{equation}
and for $\tau\in\Sigma(2)$,
\begin{equation}
 \label{eq:intersection D tau}
\Vol_M(F^\tau)= \frac{1}{(n-2)!}\, [V(\tau)]\cdot D^{n-2}.
\end{equation}
Note that for Equations \eqref{eq:intersection D rho} and \eqref{eq:intersection D tau}, the arguments from \cite[Section 5.3]{Ful93} extend to the normal setting.
\end{proof}

The equalities from Proposition \ref{prop:intersection numbers} will be used to translate the inequalities \eqref{eq:SCC for lattice polytopes} and \eqref{eq:BG for lattice polytopes} into intersection theoretical inequalities on $X'$. We will also use the following formula for the total Chern class of $X'$ \cite[Proposition 13.1.2]{CLS}:
\begin{equation}
 \label{eq:total chern class}
 c(X')=\sum_{\sigma\in\Sigma'}\, [V(\sigma)].
\end{equation}
In particular,
\[
 c_1(X')=\sum_{\rho\in\Sigma'(1)}\, [D_\rho]
\]
and 
\[
 c_2(X')=\sum_{\tau'\in\Sigma'(2)}\, [V(\tau')].
\]

\subsection{SCI and stability of the tangeant sheaf}
It is well known from specialists that the SCI \eqref{eq:SCC for lattice polytopes} correspond precisely to the slope semistability of the tangent sheaf $\cT_X$ of $X$, at least when $X$ is smooth \cite{HNS19,DDK20}. We explain now how to adapt this result to the case when $X$ is singular, working instead on the smooth model $(X',D')$, with $D'$ big and nef. A torsion--free sheaf $\cE$ on $X'$ is said to be \textit{slope semistable} with respect to
\[
L:=D'
\]
if for any  coherent subsheaf $\cF\subset \cE$ with $0<\rank(\cF)<\rank(\cE)$, one has the slope inequality
\begin{equation}
 \label{eq:slope inequalities}
 \frac{c_1(\cF)\cdot L^{n-1}}{\rank (\cF)}\leq \frac{c_1(\cE)\cdot L^{n-1}}{\rank (\cE)}.
\end{equation}
When strict inequalities always hold, $\cE$ is said to be \textit{slope stable}. The quantity
\[
\frac{c_1(\cF)\cdot L^{n-1}}{\rank (\cF)}
\]
is called the \textit{slope} of $\cF$, and is denoted $\mu_L(\cF)$.
\begin{proposition}
 \label{prop:equiv semistable}
 The tangent sheaf $\cT_{X'}$ is slope semistable with respect to $L$ if and only if the SCI \eqref{eq:SCC for lattice polytopes} holds for any subspace $E\subset N_\R$, where $P\subset M_\R$ is the lattice polytope associated to $(X,D)$.
\end{proposition}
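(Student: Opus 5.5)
The plan is to reduce slope semistability of $\cT_{X'}$ to a test on torus-equivariant saturated subsheaves, and then to compute the slopes of these subsheaves with Proposition \ref{prop:intersection numbers}.

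The first step is the reduction. Because $\cT_{X'}$ carries a natural $T$-linearization, its Harder--Narasimhan filtration with respect to the nef and big class $L=D'$ is unique, so it is $T$-invariant. If $\cT_{X'}$ is not semistable, the maximal destabilizing subsheaf is therefore an equivariant saturated subsheaf. Existence and uniqueness of the Harder--Narasimhan filtration for a nef and big polarization on the smooth projective $X'$ follow from the usual boundedness of slopes, which holds because $L^{n-1}$ is a movable curve class. Hence it is enough to check the slope inequalities \eqref{eq:slope inequalities} for $T$-equivariant reflexive saturated subsheaves $\cF\subset\cT_{X'}$.

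The second step uses Klyachko's description of such subsheaves. The tangent sheaf corresponds to the filtrations $E^\rho(i)$ of $N_\C$ given by $E^\rho(i)=N_\C$ for $i\le 0$, $E^\rho(1)=\C u_\rho$, and $E^\rho(i)=0$ for $i>1$. An equivariant saturated subsheaf is determined by a subspace $W\subset N_\C$ with the induced filtrations. Since the filtrations are defined over $\R$, we may restrict to real rational subspaces $E=W\cap N_\R$ when that is the relevant case, and we can also handle general complex $W$, because only the set $\{\rho : u_\rho\in W\}$ enters. For a subsheaf $\cF_W$ one gets $c_1(\cF_W)=\sum_{u_\rho\in W}[D_\rho]$, either from Klyachko's first Chern class formula or from the local computation on the smooth affine charts of $X'$. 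Pairing with $L^{n-1}$ and applying Proposition \ref{prop:intersection numbers}(1) gives
\[
\mu_L(\cF_W)=\frac{(n-1)!}{\dim W}\sum_{\rho\in\Sigma(1),\,u_\rho\in W}\Vol_M(F^\rho),
\]
because the rays of $\Sigma'(1)\setminus\Sigma(1)$ contribute zero. In the same way, using $c_1(X')=\sum_{\rho\in\Sigma'(1)}[D_\rho]$,
\[
\mu_L(\cT_{X'})=\frac{(n-1)!}{n}\sum_{\rho\in\Sigma(1)}\Vol_M(F^\rho).
\]
The semistability inequalities for these subsheaves are therefore exactly \eqref{eq:SCC for lattice polytopes} for $E=\mathrm{span}_\R(W\cap N_\R)$. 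One direction of the equivalence is now immediate: semistability implies the SCI by taking $W=E_\C$. For the converse, a subspace $W$ gives at most the sum attached to its real part $E=W\cap\overline{W}\cap N_\R$ with $\dim E\le\dim W$, so the SCI for $E$ bounds the slope of $\cF_W$ by the slope of $\cT_{X'}$.

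I expect the main obstacle to be the first step. The difficulty is justifying the Harder--Narasimhan theory and the equivariance of the maximal destabilizing subsheaf when $L$ is only nef and big rather than ample. I would handle this with the standard argument of Greb--Kebekus--Peternell, or by perturbing to $L+\epsilon A$ with $A$ ample and passing to the limit, since semistability is a closed condition along such perturbations. A secondary point is Klyachko's classification of equivariant reflexive subsheaves on the non-affine smooth $X'$, which is standard.
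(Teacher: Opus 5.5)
Your proposal is correct and follows essentially the same route as the paper. The paper also uses uniqueness of the Harder--Narasimhan filtration for the movable class $L^{n-1}$ (Greb--Kebekus--Peternell) to make the maximal destabilizing subsheaf torus-equivariant. It then uses the Klyachko--Perling description of equivariant saturated subsheaves by subspaces $W\subset N_\C$, with $c_1(\cF_W)=\sum_{u_\rho\in W}[D_\rho]$, and Proposition \ref{prop:intersection numbers} to turn the slope inequalities into the SCI.

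You go slightly beyond the paper in one place: your passage from a complex $W$ to the real subspace $W\cap N_\R$, using $\dim_\R(W\cap N_\R)\le\dim_\C W$, is a detail the paper glosses over, and you handle it correctly. Your fallback perturbation remark is under-justified, since closedness of semistability alone does not give the needed implication, but the main argument does not rely on it.
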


\begin{proof}
By Proposition \ref{prop:intersection numbers}, equation \eqref{eq:intersection D rho} and equation \eqref{eq:total chern class},
 \begin{equation*}
  \begin{array}{ccc}
  \displaystyle  \frac{1}{n}\sum_{\rho\in\Sigma(1)}\,\Vol_M(F^\rho) &  = &\displaystyle \frac{1}{n!}\sum_{\rho\in\Sigma(1)}\,  D_\rho\cdot D^{n-1}\\
  &  = &\displaystyle \frac{1}{n!}\sum_{\rho\in\Sigma'(1)}\,  D_\rho\cdot D^{n-1}\\
     & = &\displaystyle  \frac{1}{n!}\,c_1(\cT_{X'})\cdot D^{n-1}\\
  & = & \displaystyle\frac{\mu_L(\cT_{X'})}{(n-1)!}.
  \end{array}
 \end{equation*}
On the over hand, by Klyachko's work \cite{Kly90}, as the tangeant bundle $\cT_{X'}$ is torus equivariant, it corresponds to a family of filtrations $(T^\rho(j))_{j\in\Z, \rho\in\Sigma'(1)}$ for the vector space $N_\C=N\otimes_\Z\C$. From Perling \cite{Per04}, any saturated coherent equivariant subsheaf $\cE\subset\cT_{X'}$ corresponds to a subfamily of filtrations of the form $(E\cap T^\rho(j))_{j\in\Z, \rho\in\Sigma(1)}$, for some vector subspace $E\subset N_\C$ (see e.g. \cite[Lemma 2.15]{NaTip} for the saturation property). Then, from \cite[Section 2]{HNS19}, the slope of $\cE$ with $0<\rank(\cE)$ is precisely given by
\[
 \mu_L(\cE)=\frac{1}{\dim(E)}\sum_{\substack{\rho\in\Sigma'(1)\\ u_\rho\in E} } \: D_\rho\cdot L^{n-1}.
\]
By Proposition \ref{prop:intersection numbers} again, this equals
\[
 \mu_L(\cE)=\frac{(n-1)!}{\dim(E)}\sum_{ u_\rho\in E } \:\Vol_M(F^\rho).
\]
Hence, the SCI \eqref{eq:SCC for lattice polytopes} are satisfied precisely when $\cT_{X'}$ is slope semistable with respect to \textit{equivariant} subsheaves. This in turn implies semistability \cite{Koo11}. Indeed, as $D$ is nef, there exists by \cite[Section 2.6]{GrebKebPet} a unique Jordan--H\"older filtration for $\cT_{X'}$, that is a filtration by coherent subsheaves
\[
 0=\cE_0\subsetneq \cE_1\subsetneq \cE_1\subsetneq\ldots\subsetneq \cE_d=\cT_{X'}
\]
where each quotient $\cQ_i=\cE_{i}/\cE_{i-1}$ is torsion--free, slope semistable, and where the sequence of slopes $\mu_D(\cQ_i)$ is strictly decreasing. From uniqueness, we deduce that it is equivariant under the torus action. Hence,  semistability, which is characterized by having a single term $\cE_1=\cT_{X'}$ in the Jordan--H\"older filtration, is implied by its equivariant version. This concludes the proof.
\end{proof}
\begin{remark}
 We  work on a smooth model $(X',D')$ for $(X,D)$ to be able to use Chern classes and their intersections. One could use instead Chern numbers on singular varieties as defined in \cite{ClauGrafGuen}, but the approach is essentially equivalent, as those intersection numbers require a resolution to be defined.
\end{remark}

\subsection{Bogomolov--Gieseker inequality for big and nef classes}

A classical result in algebraic geometry is the Bogomolov--Gieseker inequality, that we now recall.
\begin{theorem}
 \label{theo:BG}
 Let $\cE$ be a torsion-free slope semistable sheaf on $(X',D')$. Then
 \begin{equation}
  \label{eq:BG inequality}
  \Delta(\cE)\cdot (D')^{n-2}\geq 0,
 \end{equation}
where $\Delta(\cE)=2\,\rank(\cE)\,c_2(\cE)-(\rank(E)-1)\,c_1^2(\cE)$ is the discriminant of $\cE$.
\end{theorem}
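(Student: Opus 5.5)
The plan is to deduce the big and nef case from the classical ample case by perturbation, since Theorem \ref{theo:BG} is standard when $D'$ is ample. Fix an ample divisor $A$ on $X'$ (which is projective, being obtained from the projective $X$ by star subdivisions). For $\varepsilon\in\Q_{>0}$, set $H_\varepsilon:=D'+\varepsilon A$, which is an ample $\Q$-divisor. Since $\Delta(\cE)\cdot H_\varepsilon^{n-2}$ is a polynomial in $\varepsilon$ converging to $\Delta(\cE)\cdot (D')^{n-2}$, it suffices to get a lower bound on $\Delta(\cE)\cdot H_\varepsilon^{n-2}$ that tends to $0$. The difficulty is that semistability with respect to $D'$ need not imply semistability with respect to $H_\varepsilon$, so the classical inequality cannot be applied directly.

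To deal with this, I would invoke Langer's effective form of the Bogomolov inequality for \emph{arbitrary} torsion-free sheaves and ample $H$. For $r=\rank(\cE)$ it states
\[
\Delta(\cE)\cdot H^{n-2}+\frac{r^2}{H^n}\big(\mu_{\max,H}(\cE)-\mu_H(\cE)\big)\big(\mu_H(\cE)-\mu_{\min,H}(\cE)\big)\geq 0 ,
\]
where $\mu_{\max,H}$ and $\mu_{\min,H}$ are the extreme slopes of the Harder--Narasimhan filtration. Applying this with $H=H_\varepsilon$, the factor $1/H_\varepsilon^n$ stays bounded as $\varepsilon\to 0$, because $H_\varepsilon^n\to (D')^n>0$ by bigness of $D'$. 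The factor $\mu_{H_\varepsilon}(\cE)-\mu_{\min,H_\varepsilon}(\cE)$ also stays bounded. So it remains to show that $\mu_{\max,H_\varepsilon}(\cE)\to\mu_{D'}(\cE)$.

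This convergence is the main obstacle. The plan is to use the theory of slope stability with respect to movable curve classes from \cite{GrebKebPet}: the class $(D')^{n-1}$ is movable, being a limit of the classes $H_\varepsilon^{n-1}$. Greb--Kebekus--Peternell show that Harder--Narasimhan filtrations exist for such classes and that the maximal destabilizing subsheaves range over a bounded family near a given class. From this boundedness, I expect $\alpha\mapsto\mu_{\max,\alpha}(\cE)$ to be continuous on the movable cone; this is the step I would check most carefully. Given continuity, we obtain $\mu_{\max,H_\varepsilon}(\cE)\to\mu_{\max,D'}(\cE)$. Semistability of $\cE$ with respect to $D'$ gives $\mu_{\max,D'}(\cE)=\mu_{D'}(\cE)$, and $\mu_{H_\varepsilon}(\cE)\to\mu_{D'}(\cE)$ trivially.

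Hence the correction term in Langer's inequality tends to $0$, and letting $\varepsilon\to 0$ yields \eqref{eq:BG inequality}. If continuity of $\mu_{\max}$ turned out to be problematic, I would fall back on the following argument. The set of walls $\{\alpha : \mu_\alpha(\cF)=\mu_\alpha(\cE)\}$, for $\cF$ in the bounded family of potentially destabilizing subsheaves, is locally finite near $(D')^{n-1}$. So for small $\varepsilon$ the $H_\varepsilon$-HN filtration of $\cE$ is constant, with all its factors having equal $D'$-slope. One would then sum Bogomolov for the $H_\varepsilon$-semistable factors, using the Hodge index theorem to control the cross terms $c_1$-differences, which become $D'$-orthogonal in the limit.
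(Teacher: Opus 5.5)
Your route differs from the paper's. The paper first treats a $D'$-stable $\cE$: openness of stability along $D'+\varepsilon A$, the classical inequality for the ample class $D'+\varepsilon A$, then $\varepsilon\to 0$. It reduces the semistable case to this through a Jordan--H\"older filtration with respect to $D'$, using that nonnegativity of $\Delta\cdot(D')^{n-2}$ is preserved under extensions of sheaves of equal $D'$-slope (a direct computation plus Khovanskii--Teissier). Your use of Langer's effective inequality for the ample classes $H_\varepsilon$ handles all semistable sheaves at once. It avoids Jordan--H\"older filtrations and the extension lemma, at the price of a deeper input and of having to prove $\mu_{\max,H_\varepsilon}(\cE)-\mu_{H_\varepsilon}(\cE)\to 0$.

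That last step is a gap as written. You only say you expect it, and the justification you propose is not available at this class. Local boundedness of destabilizing subsheaves and local finiteness of walls, as in Greb--Kebekus--Peternell, are statements near classes in the \emph{interior} of the movable cone. But $(D')^{n-1}$ lies on its boundary whenever the resolution adds rays: by Proposition \ref{prop:intersection numbers}, $D_\rho\cdot(D')^{n-1}=0$ for $\rho\in\Sigma'(1)\setminus\Sigma(1)$, although $D_\rho$ is a nonzero effective divisor. Your fallback, with locally finite walls near $(D')^{n-1}$, rests on the same unjustified boundedness.

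What rescues the argument is an elementary one-sided estimate along the ray. Pick $m,N$ with $(\wedge^k\cE)^{**}\hookrightarrow\cO(mA)^{\oplus N}$ for $1\le k\le r$. If $\cF\subset\cE$ has rank $k$, the nonzero map $\det\cF\to\cO(mA)^{\oplus N}$ shows that $mA-c_1(\cF)$ is effective. Hence $c_1(\cF)\cdot(D')^{n-1-j}A^j\le m\,A\cdot(D')^{n-1-j}A^j$ for all $j$, these classes being products of nef divisors. Expanding $H_\varepsilon^{n-1}$ and using $\mu_{D'}(\cF)\le\mu_{D'}(\cE)$ gives $\mu_{H_\varepsilon}(\cF)\le\mu_{D'}(\cE)+C\varepsilon$ uniformly in $\cF$ for $\varepsilon\le 1$. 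Therefore $0\le\mu_{\max,H_\varepsilon}(\cE)-\mu_{H_\varepsilon}(\cE)=O(\varepsilon)$.

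You also left the other factor unjustified. Use $\mu_H(\cE)-\mu_{\min,H}(\cE)\le(r-1)\big(\mu_{\max,H}(\cE)-\mu_H(\cE)\big)$, which follows from $r\mu_H(\cE)=\sum_i r_i\mu_i$ over the Harder--Narasimhan factors. Since $H_\varepsilon^n\ge(D')^n>0$, Langer's correction term is $O(\varepsilon^2)$, and letting $\varepsilon\to0$ concludes. The same uniform bound, combined with integrality of $c_1(\cF)\cdot(D')^{n-1}$, is also what justifies the openness of stability at this boundary class in the paper's stable case.
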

Note that $X'$ is smooth and projective, hence the Chern classes or $\cE$ are well defined. This inequality is a classical result when $D'$ is an ample divisor \cite[Corollary 4.7]{Miyaoka}, and is certainly known to experts when $D'$ is merely big and nef. For convenience of the reader, we recall briefly how to obtain the result in our setting, following \cite{ClauGrafGuen}. First, if $\cE$ is slope stable, one can consider ample $\Q$-divisors $D_\varepsilon=D'+\varepsilon A$, with $\varepsilon\in\Q_+$ small and $A$ ample. By openess of stability with respect to the change of polarisation, $\cE$ is stable with respect to $D_\varepsilon$ for all $\varepsilon$ small enough. Then, it satisfies the Bogomolov--gieseker inequality with respect to $D_\varepsilon$, and the result follows letting $\varepsilon \to 0$. If $\cE$ is only semistable, then it admits a Jordan--H\"older filtration
\[
0=\cE_0\subsetneq \cE_1\subsetneq\ldots\subsetneq \cE_d=\cE
\]
by coherent subsheaves such that the successive quotients $\cQ_i=\cE_{i+1}/\cE_i$ are torsion--free and stable with the same slope as $\cE$, cf \cite[Proposition 3.5]{ClauGrafGuen}. Each $\cQ_i$  satisfies the Bogomolov--Gieseker inequality $\Delta(\cQ_i)\cdot (D')^{n-2}\geq 0$ by the previous case. Then, one uses the fact that for a short exact sequence of torsion-free sheaves
\[
 0\to \cE_1\to \cE_2\to\cE_3\to 0,
\]
with $\mu_D(\cE_1)=\mu_D(\cE_3)$ and
\[
\Delta(\cE_i)\cdot (D')^{n-2}\geq 0
\]
for $i\in\lbrace 1, 3\rbrace$, one has 
\[
 \Delta(\cE_2)\cdot (D')^{n-2}\geq 0,
\] 
which follows from a direct computation and the  Hovanskii--Teissier inequality, cf the proof  of \cite[Lemma 3.7]{ClauGrafGuen}. By induction on the number of quotients in the Jordan--H\"older filtration for $\cE$, the result follows.

 \section{Proof of Theorem \ref{theointro:main theorem}}
 \label{sec:proof main theorem}
 Assume that we are in the setting of Theorem \ref{theointro:main theorem}, and consider the smooth toric variety $X'$ with nef and big class $D'$ as in the previous sections. Then, by Proposition \ref{prop:equiv semistable},  $\cT_{X'}'$ is slope semistable with respect to $D''$. From Bogomolov--Gieseker inequality,
 \[
\Delta(\cT_{X'})\cdot (D')^{n-2}\geq 0,
 \]
 that is
 \begin{equation}
  \label{eq:BG intersection X'}
  2nc_2(X')\cdot (D')^{n-2}-(n-1)c_1^2(X')\cdot (D')^{n-2}\geq 0.
 \end{equation}
 We will now translate this latter inequality in convex geometric terms for $P$. To do so, we first need to control the number of two dimensional cones $\tau'\in\Sigma'(2)$ that are sent into cones $\tau\in\Sigma(2)$.
 \begin{lemma}
  \label{lem:2 cones}
  The resolution $f : X'\to X$ may be chosen such that for any $\tau\in\Sigma(2)$,  $\tau$ is exactly the union of $\mult(\tau)$ cones in $\Sigma'(2)$.
 \end{lemma}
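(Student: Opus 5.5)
Our plan is to build the resolution in two stages. In the first stage we only subdivide the $2$-dimensional cones of $\Sigma$, and we do it in exactly the prescribed way. In the second stage we resolve the remaining singularities by star subdivisions at rays lying in the relative interiors of cones of dimension at least $3$; such subdivisions never touch the $2$-dimensional skeleton.

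\emph{First stage.} For $\tau=\R_+u_\rho+\R_+u_{\rho'}\in\Sigma(2)$, the cone $\tau$ is, up to a lattice isomorphism of $(\R\tau)\cap N\simeq\Z^2$, the cone spanned by $(1,0)$ and $(p,m_\tau)$, where $m_\tau=\mult(\tau)$ and $0\le p<m_\tau$. The lattice points of the half-open parallelogram $\{\lambda u_\rho+\lambda' u_{\rho'}: 0\le\lambda,\lambda'<1\}$ are in bijection with $(\Z\tau\cap N)/(\Z u_\rho+\Z u_{\rho'})$. This group has order $m_\tau$ and is cyclic, since its elements are $\{k(p,m_\tau)/m_\tau \bmod \Z^2\}$. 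One of these points is $0$. The other $m_\tau-1$ points give distinct rays in the interior of $\tau$, because two distinct points of the parallelogram can lie on the same ray only if their difference is in the lattice, and that difference would then be an integer combination with coefficients in $(-1,1)$. These interior rays, together with $\rho$ and $\rho'$, cut $\tau$ into exactly $m_\tau$ two-dimensional subcones.

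We insert these rays one at a time by star subdivisions of $\Sigma$ at the corresponding primitive vectors $v$. Since $v$ lies in the relative interior of $\tau$, the star subdivision $\Sigma^*(v)$ modifies only the cones containing $\tau$. On $\tau$ itself it splits the cone containing $v$ into two $2$-cones. Each new $2$-cone created inside a higher cone $\sigma\supset\tau$ has the form $\R_+v+\R_+w$ with $w$ a ray of $\sigma$ not in $\tau$. Such a cone is not contained in any cone of $\Sigma(2)$. Different $\tau$'s have disjoint relative interiors, so these operations do not interfere with one another. After finitely many steps we obtain a fan $\Sigma_1$ in which each $\tau\in\Sigma(2)$ is the union of exactly $m_\tau$ cones of $\Sigma_1(2)$. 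Each of these cones is smooth, since by construction its two generators span the lattice $(\R\tau)\cap N$; this is the usual Hirzebruch--Jung fact.

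\emph{Second stage.} We follow the proof of \cite[Theorem 11.1.9]{CLS}. It resolves a fan by repeated star subdivisions, each at a primitive vector in the relative interior of a singular cone of minimal dimension, or more precisely of a minimal non-smooth cone. After the first stage, every cone of dimension at most $2$ is smooth, so every singular cone has dimension at least $3$. We therefore need every subdividing vector to lie in the relative interior of a cone of dimension at least $3$. Such a subdivision preserves all existing $2$-cones, except those containing no face in question. Any new $2$-cones it creates contain the new ray, and that ray lies in the interior of a cone of dimension at least $3$ of the previous fan. Such a $2$-cone cannot be contained in a cone of $\Sigma(2)$: the support of $\Sigma(2)$-cones is a union of old $2$-cones of the refined fan, and the new ray is not in that support. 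Projectivity is preserved because star subdivisions are projective, as in \cite[Proposition 11.1.7]{CLS}.

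\emph{Main obstacle.} The delicate point is checking that the algorithm of \cite{CLS} can indeed be run using only vectors in relative interiors of cones of dimension at least $3$. Concretely, we pick a singular cone $\sigma$ all of whose proper faces are smooth, and a nonzero lattice point of the half-open fundamental parallelepiped of $\sigma$; a standard count shows such points exist since $\sigma$ is singular. The trouble is that such a point could lie in a proper face. But every proper face is smooth, so its fundamental parallelepiped contains no nonzero lattice point. Hence the chosen point lies in the relative interior of $\sigma$, and $\dim\sigma\ge3$. The multiplicity then strictly drops, as in the standard proof, which ensures termination. Combining the two stages gives the required $f$.
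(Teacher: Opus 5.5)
The gap is in the counting step of your first stage. The $m_\tau-1$ nonzero lattice points of the half-open parallelogram need not span $m_\tau-1$ distinct rays, and your justification does not show they do. The difference of two lattice points always lies in $N$. What distinct parallelogram points are guaranteed is only to be distinct modulo $\Z u_\rho+\Z u_{\rho'}$, and that does not prevent one from being a multiple of another.

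For example, take $N=\Z^2$ and $\tau=\mathrm{Cone}((1,0),(2,3))$, so $\mult(\tau)=3$. The lattice points of the half-open parallelogram are $0$, $(1,1)=\frac13(1,0)+\frac13(2,3)$ and $(2,2)=2\,(1,1)$. There is a single interior ray, and $\tau$ is cut into the two smooth cones $\mathrm{Cone}((1,0),(1,1))$ and $\mathrm{Cone}((1,1),(2,3))$, not three. In general $2v$ is again a parallelogram point whenever both coefficients of $v$ are $<\frac12$. So your construction yields some number $k_\tau\le m_\tau$ of cones, with strict inequality in general, and the lemma as stated is not reached. The paper's own proof ends with the same assertion via \cite[Proposition 11.1.8(b)]{CLS}, and it fails on the same example: one star subdivision at $(1,1)$ already resolves $\tau$ into two cones. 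Switching to the paper's route would therefore not help here.

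The repair fits your framework. There are at most $m_\tau-1$ interior rays, so $k_\tau\le m_\tau$. While $k_\tau<m_\tau$, star-subdivide at $u_1+u_2$ for a smooth cone $\mathrm{Cone}(u_1,u_2)\subset\tau$ of the current fan. This keeps every cone inside $\tau$ smooth and raises $k_\tau$ by one. By your own Stage~1 argument, the only new $2$-cones it creates inside an element of $\Sigma(2)$ are the two halves. Do this before Stage~2.

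Two smaller points also need fixing.
\begin{itemize}
\item Smoothness of the Stage~1 cones is neither ``by construction'' nor the Hirzebruch--Jung statement. That statement concerns the vertices of $\mathrm{conv}((\tau\cap N)\setminus\{0\})$, which in general give fewer rays, and inserting a ray through $pu_1+qu_2$ into a smooth cone destroys smoothness unless $p=q=1$. The claim is true but needs an argument. Let $v,w$ be primitive generators of consecutive rays; then $\mathrm{conv}(0,v,w)$ lies in the closed parallelogram. The only lattice points of the closed parallelogram outside the half-open one are $u_\rho$, $u_{\rho'}$ and $u_\rho+u_{\rho'}$. If $\mathrm{Cone}(v,w)$ were singular, $\mathrm{conv}(0,v,w)$ would contain a non-vertex lattice point, and that point would be a parallelogram point strictly between the two rays, a contradiction.
\item $\Sigma$ need not be simplicial, while your parallelepiped argument in Stage~2 needs simplicial cones. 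You must first simplicialize as in \cite[Proposition 11.1.7]{CLS}, which is the first step of the paper's proof. This leaves the cones of $\Sigma(2)$ intact but may create new singular $2$-cones. So ``every singular cone has dimension at least $3$'' must be weakened to ``no subdividing vector lies in a cone of $\Sigma(2)$''. Your relative-interior argument still gives this, because the new $2$-cones have relative interiors inside cones of $\Sigma$ of dimension at least $3$.
\end{itemize}
With these changes, your order of operations (fix the $2$-skeleton first, then subdivide only away from it) is a sound alternative to the paper's decreasing induction on dimension. It is also more explicit than the paper about why $\Sigma(2)$ is left untouched.
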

 \begin{proof}
 Note that the resolution $f : X' \to X$ is obtained by successive star subdivisions, see the proofs of \cite[Theorem 11.1.9]{CLS} and \cite[Proposition 11.1.7]{CLS}. The first subdivisions, \cite[Proposition 11.1.7]{CLS}, provide a simplicial model for $X$. By construction, see \cite[Proposition 11.1.6]{CLS}, the star subdivisions involved in the simplicialization may add $2$-dimensional cones to $\Sigma$, but will not subdivide the elements in $\Sigma(2)$. Then, the smooth model is obtained by successive subdivisions of cones $\sigma$ with $\mult(\sigma)>1$, cf proof of  \cite[Theorem 11.1.9]{CLS}. This process can be done by decreasing induction on $\dim(\sigma)$ by \cite[Proposition 11.1.8(d)]{CLS}. When $\dim(\sigma) >2$, the star subdivision involved will not affect the elements in $\Sigma(2)$. Hence, only the end of the process, where $\tau\in\Sigma(2)$ has $\mult(\tau)>1$, will modify the cones in $\Sigma(2)$. In that case, by the proof of \cite[Theorem 11.1.9]{CLS}, and \cite[Proposition 11.1.8(b)]{CLS}, we see that $\tau$ is subdivided in $\mult(\tau)$ cones, which concludes the proof of the lemma.
 \end{proof}

 From the previous Lemma \ref{lem:2 cones}, and by  \eqref{eq:total chern class} and Proposition \ref{prop:intersection numbers} we obtain
 \begin{equation}
  \label{eq:intersection c 2 term}
  c_2(X')\cdot (D')^{n-2}=(n-2)!\,\sum_{\tau\in\Sigma(2)} \mult(\tau)\, \Vol_M(F^\tau).
 \end{equation}
 What is left in \eqref{eq:BG intersection X'} is obtaining a geometric interpretation of the intersection number $c_1^2(X')\cdot (D')^{n-2}$. This could be done by mean of Hirzebruch--Riemann-Roch formula, as $\chi(X,\cO(kD))=\mathrm{Ehr}_P(k)$ is the Ehrhart polynomial of $P$, whose coefficients can be computed thanks to the local Euler--MacLaurin formula of Berline--Vergne \cite{BerVer}. Instead, we use a very simple and direct argument, borrowing an idea from \cite{ChanLeung}, to remove the self intersection terms $D_\rho^2\cdot (D')^{n-2}$. In what follows, we will use  \cite[Lemma 12.5.2]{CLS} which implies that
\[
D_\rho\cdot D_{\rho'}=[V(\tau')]                                                                \]
if $\tau'=\rho+\rho'\in\Sigma'(2)$, while
\[
 D_\rho\cdot D_{\rho'}=0
\]
if $\rho\neq \rho'$ and $\rho+\rho'\notin \Sigma'(2)$.  We have
 \begin{equation*}
  \begin{array}{ccc}
   c_1^2(X')\cdot (D')^{n-2} & = &\displaystyle \Big(\sum_{\rho\in\Sigma'(1)} D_\rho\Big)^2\cdot (D')^{n-2} \\
    & = &\displaystyle 2\Big(\sum_{\substack{\tau'\in\Sigma'(2)\\ \tau'=\rho+\rho'}}  D_\rho\cdot D_{\rho'}\cdot (D')^{n-2}\Big)+ \sum_{\rho\in\Sigma'(1)} D_\rho^2\cdot (D')^{n-2}\\
    & = &\displaystyle 2\Big(\sum_{\tau'\in\Sigma'(2)}  [V(\tau')]\cdot (D')^{n-2}\Big)+ \sum_{\rho\in\Sigma'(1)} D_\rho^2\cdot (D')^{n-2}.
  \end{array}
 \end{equation*}
We may fix for each $\rho\in\Sigma'(1)$ a rational element $m_\rho\in M_\Q=M\otimes_\Z\Q$ such that $\langle m_\rho, u_\rho \rangle =-1$ (recall that $u_\rho$ is a primitive generator of $\rho$). Let $\chi^{m_\rho}$ be the character function associated to $m_\rho$. Then, denoting $\sim_\Q$ the $\Q$-rational equivalence, we have
\[
 D_\rho\sim_\Q D_\rho +\mathrm{div}(\chi^{m_\rho})
\]
and thus
\begin{equation*}
  \begin{array}{ccc}
  \displaystyle \sum_{\rho\in\Sigma'(1)} D_\rho^2 & \sim_\Q &  \displaystyle \sum_{\rho\in\Sigma'(1)} D_\rho\cdot \Big(D_\rho +\mathrm{div}(\chi^{m_\rho}) \Big) \\
    & \sim_\Q & \displaystyle  \sum_{\rho\in\Sigma'(1)} D_\rho\cdot \Big(D_\rho + \sum_{\rho'\in\Sigma'(1)} \langle m_\rho, u_{\rho'}\rangle\, D_{\rho'}\Big)\\
    & \displaystyle\sim_\Q & \displaystyle\sum_{\rho\neq\rho'} \langle m_\rho, u_{\rho'}\rangle\, D_\rho\cdot D_{\rho'}
  \end{array}
 \end{equation*}
where we used
\[
\mathrm{div}(\chi^{m_\rho})=\sum_{\rho'}\langle m_\rho, u_{\rho'}\rangle\, D_{\rho'},
\]
cf \cite[Proposition 4.1.2]{CLS}. We obtain
\[
\displaystyle \sum_{\rho\in\Sigma'(1)} D_\rho^2\sim_\Q  \displaystyle\sum_{\tau'=\rho_1+\rho_2\in\Sigma'(2)} \big(\langle m_{\rho_1}, u_{\rho_2}\rangle+\langle m_{\rho_2}, u_{\rho_1}\rangle\big)\, [V(\tau')].
\]
Altogether, using again Proposition \ref{prop:intersection numbers} and Lemma \ref{lem:2 cones}, we deduce that
\begin{equation}
 \label{eq:quadratic c one term}
 c_1^2(X')\cdot (D')^{n-2} = (n-2)! \sum_{\tau\in\Sigma(2)} \big(2\,\mult(\tau)+\alpha_\tau)\: \Vol_M(F^\tau)
\end{equation}
where
\[
 \alpha_\tau:=\sum_{\substack{\tau'=\rho_1+\rho_2\in\Sigma'(2)\\
 \tau'\subset\tau}} \langle m_{\rho_1}, u_{\rho_2}\rangle+\langle m_{\rho_2}, u_{\rho_1}\rangle.
\]
We use now the integral scalar product $Q$ on $N_\R$, which enables to identify $M_\R$ with $N_\R$, and choose $m_\rho\simeq -\frac{u_\rho}{Q(u_\rho,u_\rho)}$. This yields
\begin{equation}
 \label{eq:alpha tau}
 \alpha_\tau=-\sum_{\substack{\tau'=\rho+\rho'\in\Sigma'(2)\\
 \tau'\subset\tau}} \Big(\frac{Q(u_\rho,u_{\rho'})}{Q(u_\rho,u_\rho)}+\frac{Q(u_{\rho'},u_{\rho})}{Q(u_{\rho'},u_{\rho'})}\Big).
\end{equation}
Then, Inequality \eqref{eq:BG intersection X'}, with \eqref{eq:intersection c 2 term} and \eqref{eq:quadratic c one term} implies
\begin{equation}
 \label{eq:final}
 \sum_{\tau\in\Sigma(2)} \big( 2\,\mult(\tau)-(n-1)\alpha_\tau \big) \Vol_M(F^\tau) \geq 0.
\end{equation}
From \eqref{eq:alpha tau},
\[
 \cA_\tau= 2\,\mult(\tau)-(n-1)\alpha_\tau,
\]
and
this concludes the proof of Theorem \ref{theointro:main theorem}.

 \section{Proof of Theorem \ref{theointro:Fano case}}
 We specify the setting to a smooth reflexive polytope $P$. In that case, $X'=X$ is smooth and Fano, with $D'=D=-K_X$.  The degree of $\cT_X$ is given by
 \begin{equation*}
 \begin{array}{ccc}
 c_1(\cT_X)\cdot D^{n-1}  & = & D^{n}\\
  & = & n!\, \Vol_M(P).
  \end{array}
 \end{equation*}
 Hence, the SCI \eqref{eq:SCC for reflexive polytopes} is nothing but equivariant stability of $\cT_X$ with respect to $D$. We deduce the Bogomolov--Gieseker inequality
\[
 \Delta(\cT_X)\cdot D^{n-2}\geq 0.
\]
Using \eqref{eq:total chern class} and Proposition \ref{prop:intersection numbers}, we have
\begin{equation*}
 \begin{array}{ccc}
 \Delta(\cT_X)\cdot D^{n-2} & = & 2nc_2(X)\cdot D^{n-2}-(n-1)c_1^2(X)\cdot D^{n-2}\\
  & = & \displaystyle 2n(n-2)!\,\sum_{\tau\in\Sigma(2)}\,\Vol_M(F^\tau) -(n-1)n!\, \Vol_M(P),
  \end{array}
 \end{equation*}
and the result follows.

\section{Figures}
\label{sec:figures}
%
This section contains the two figures used to illustrate the introduction. 

\subsection{AI use declaration} An AI language model, ChatGPT (OpenAI), was used to help preparing these figures in LaTeX. The author takes full responsibility for this content.

\subsection{Lattice polytopes presentations} We give here the two presentations for lattice polytopes facets normals. The first picture in Figure \ref{fig:polytopes} has outward pointing unit   normals, while the second picture shows the inward pointing primitive generators for the normal rays.

\begin{figure}[ht]
\centering

\begin{tikzpicture}[scale=0.75,xshift=-0.8cm,>=Latex]

\foreach \x in {-4,...,4}
  \foreach \y in {-3,...,3}
    \fill[gray!60] (\x,\y) circle (1.3pt);

\draw[->] (-4.3,0) -- (4.4,0) node[right] {$x$};
\draw[->] (0,-3.3) -- (0,3.4) node[above] {$y$};

\coordinate (A) at (-3,0);
\coordinate (B) at (-2,2);
\coordinate (C) at (1,2);
\coordinate (D) at (3,0);
\coordinate (E) at (2,-2);
\coordinate (F) at (-1,-2);
\coordinate (O) at (0,0);

\fill[orange!12] (A)--(B)--(C)--(D)--(E)--(F)--cycle;
\draw[thick] (A)--(B)--(C)--(D)--(E)--(F)--cycle;

\foreach \P/\Q in {A/B,B/C,C/D,D/E,E/F,F/A}{
  \draw[dashed,gray] (O)--(\P);
  \draw[dashed,gray] (O)--(\Q);
}

\foreach \P in {A,B,C,D,E,F}
  \fill (\P) circle (2.2pt);
\fill (O) circle (2.6pt) node[below right] {$0$};

\draw[->,red,thick]
  ($(B)!0.5!(C)$) -- ++(0,1)
  node[left] {$v_1=(0,1)$};

\draw[->,blue,thick]
  ($(C)!0.5!(D)$) -- ++({1/sqrt(2)},{1/sqrt(2)})
  node[right] {$v_2=\frac1{\sqrt2}(1,1)$};

\draw[->,green!60!black,thick]
  ($(D)!0.5!(E)$) -- ++({2/sqrt(5)},{-1/sqrt(5)})
  node[right] {$v_3=\frac1{\sqrt5}(2,-1)$};

\draw[->,purple,thick]
  ($(E)!0.5!(F)$) -- ++(0,-1)
  node[below] {$v_4=(0,-1)$};

\draw[->,orange,thick]
  ($(F)!0.5!(A)$) -- ++({-1/sqrt(2)},{-1/sqrt(2)})
  node[left] {$v_5=\frac1{\sqrt2}(-1,-1)$};

\draw[->,brown,thick]
  ($(A)!0.5!(B)$) -- ++({-2/sqrt(5)},{1/sqrt(5)})
  node[left] {$v_6=\frac1{\sqrt5}(-2,1)$};

\node[align=center] at (0,-4.0)
{Pyramids $C_F$
with outward unit normals $v_i$'s};

\end{tikzpicture}
\hfill
\begin{tikzpicture}[scale=0.75,>=Latex]

\foreach \x in {-4,...,4}
  \foreach \y in {-3,...,3}
    \fill[gray!60] (\x,\y) circle (1.3pt);

\draw[->] (-4.3,0) -- (4.4,0) node[right] {$x$};
\draw[->] (0,-3.3) -- (0,3.4) node[above] {$y$};

\coordinate (A) at (-3,0);
\coordinate (B) at (-2,2);
\coordinate (C) at (1,2);
\coordinate (D) at (3,0);
\coordinate (E) at (2,-2);
\coordinate (F) at (-1,-2);
\coordinate (O) at (0,0);

\fill[orange!12] (A)--(B)--(C)--(D)--(E)--(F)--cycle;
\draw[thick] (A)--(B)--(C)--(D)--(E)--(F)--cycle;

\foreach \P in {A,B,C,D,E,F}
  \fill (\P) circle (2.2pt);
\fill (O) circle (2.6pt) node[below right] {$0$};

\draw[->,red,thick]
  ($(B)!0.5!(C)$) -- ++(0,-1)
  node[pos=1.15,above right] {$u_{\rho_1}$};

\draw[->,blue,thick]
  ($(C)!0.5!(D)$) -- ++(-1,-1)
  node[pos=0.5,below right] {$u_{\rho_2}$};

\draw[->,green!60!black,thick]
  ($(D)!0.5!(E)$) -- ++(-2,1)
  node[pos=0.5,below] {$u_{\rho_3}$};

\draw[->,purple,thick]
  ($(E)!0.5!(F)$) -- ++(0,1)
  node[pos=1,left] {$u_{\rho_4}$};

\draw[->,orange,thick]
  ($(F)!0.5!(A)$) -- ++(1,1)
  node[pos=0.5,above left] {$u_{\rho_5}$};

\draw[->,brown,thick]
  ($(A)!0.5!(B)$) -- ++(2,-1)
  node[pos=0.5,above] {$u_{\rho_6}$};

\node[align=center] at (0,-4.0)
{Primitive ray generators $u_{\rho_i}$'s as inward normals};

\end{tikzpicture}

\caption{A lattice polytope with $0$ in its interior.}
\label{fig:polytopes}
\end{figure}

\subsection{Regular subdivision}
In Figure \ref{fig:subdivision}, we start from a two dimensional singular cone $\tau$, and then produce the subdivision into smooth cones induced by the lattice points in 
\[
N\cap\lbrace  \lambda u_\rho +\lambda' u_{\rho'}\:\vert\: 0\leq \lambda<1,\: 0\leq \lambda' <1 \rbrace.
\]
The invariant $\cA_\tau$ from \eqref{eq:Alpha tau} can then be computed from this decomposition.

\begin{figure}[H]
\centering
\begin{tikzpicture}[scale=1,>=latex]

\tikzset{
  ray/.style={->, very thick},
  lattice/.style={circle, fill=black!65, inner sep=1.8pt},
  primitive/.style={circle, draw=black, fill=black!85, line width=0.4pt,
                    minimum size=5pt, inner sep=0pt},
  gridline/.style={gray!25, very thin},
  lab/.style={font=\small}
}
\begin{scope}
\node at (2,4.9) {$\tau=\R_+\cdot(1,0)+\R_+\cdot(1,4)$};

\foreach \x in {0,...,4}
  \draw[gridline] (\x,0)--(\x,4.4);
\foreach \y in {0,...,4}
  \draw[gridline] (0,\y)--(4.4,\y);

\fill[blue!10] (0,0)--(4.2,0)--(1.1,4.4)--cycle;

\foreach \x in {0,...,4}
  \foreach \y in {0,...,4}
    \fill[lattice] (\x,\y) circle;

\draw[ray] (0,0)--(4.35,0);
\draw[ray] (0,0)--(1.1,4.4);

\node[primitive] at (1,0) {};
\node[primitive] at (1,4) {};

\node at (2,2) {$\tau$};
\node[below] at (1,0) {$u_{\rho}=(1,0)$};
\node[left] at (1,4) {$u_{\rho'}=(1,4)$};

\end{scope}

\begin{scope}[xshift=7cm]

\node at (2,4.9) {Regular subdivision};

\foreach \x in {0,...,4}
  \draw[gridline] (\x,0)--(\x,4.4);
\foreach \y in {0,...,4}
  \draw[gridline] (0,\y)--(4.4,\y);

\fill[blue!8]   (0,0)--(4.2,0)--(4.2,4.2)--cycle;
\fill[blue!14]  (0,0)--(4.2,4.2)--(2.1,4.2)--cycle;
\fill[blue!8]   (0,0)--(2.1,4.2)--(1.4,4.2)--cycle;
\fill[blue!14]  (0,0)--(1.4,4.2)--(1.1,4.4)--cycle;

\foreach \x in {0,...,4}
  \foreach \y in {0,...,4}
    \fill[lattice] (\x,\y) circle;

\draw[ray] (0,0)--(4.35,0);
\draw[ray] (0,0)--(4.2,4.2);
\draw[ray] (0,0)--(2.1,4.2);
\draw[ray] (0,0)--(1.4,4.2);
\draw[ray] (0,0)--(1.1,4.4);

\foreach \k in {0,...,4}
    \node[primitive] at (1,\k) {};

\node[below] at (1,0) {$u_{\rho}=u_{\rho_0}=(1,0)$};
\node[right] at (1,1) {$u_{\rho_1}=(1,1)$};
\node[right] at (1,2) {$u_{\rho_2}=(1,2)$};
\node[left] at (1,3) {$u_{\rho_3}=(1,3)$};
\node[left] at (1,4) {$u_{\rho'}=u_{\rho_4}=(1,4)$};

\node at (3.5,2) {$\tau_1'$};
\node at (2.5,3) {$\tau_2'$};
\node at (1.7,4) {$\tau_3'$};
\node at (1.3,4.4) {$\tau_4'$};

\end{scope}

\end{tikzpicture}

\caption{A regular subdivision of a cone $\tau$ with $\mult(\tau)=4$.}

\label{fig:subdivision}
\end{figure}

\bibliography{Ref}
\bibliographystyle{amsplain}

\end{document}